\documentclass[11pt]{amsart}

\usepackage[T1]{fontenc}
\usepackage[utf8]{inputenc}
\usepackage{amsmath,amssymb,amsfonts,mathtools,url}
\usepackage{geometry}
\usepackage{booktabs}
\usepackage{enumitem}
\usepackage{hyperref}
\hypersetup{hidelinks}
\usepackage[nameinlink,capitalize]{cleveref}
\newcommand{\PP}{\mathbb P}
\newcommand{\CC}{\mathbb C}
\newcommand{\QQ}{\mathbb Q}
\newcommand{\NN}{\mathbb N}
\newcommand{\cA}{\mathcal A}
\newcommand{\cB}{\mathcal B}

\newcommand{\AR}{\operatorname{AR}}
\newcommand{\mdr}{\operatorname{mdr}}
\newcommand{\rank}{\operatorname{rank}}

\newcommand{\HT}{\operatorname{HT}}
\newcommand{\hType}{\operatorname{hType}}

\theoremstyle{plain}
\newtheorem{theorem}{Theorem}[section]
\newtheorem{proposition}[theorem]{Proposition}
\newtheorem{lemma}[theorem]{Lemma}
\newtheorem{corollary}[theorem]{Corollary}
\newtheorem{conjecture}[theorem]{Conjecture}
\theoremstyle{definition}
\newtheorem{definition}[theorem]{Definition}

\newtheorem{remark}[theorem]{Remark}

\numberwithin{equation}{section}

\title[A nine-line counterexample]{A nine-line counterexample to a conjecture on the minimal degree of Jacobian relations}

\author{Alexandru Dimca}
\address{Universit\'e C\^ote d'Azur, CNRS, LJAD, France  and Simion Stoilow Institute of Mathematics, P.O. Box 1-764\\ RO-014700 Bucharest, Romania}
\email{Alexandru.DIMCA@univ-cotedazur.fr}

\author{Piotr Pokora}
\address{Department of Mathematics, UKEN Krakow, Podchor\c a\.zych 2, PL-30-084 Krak\'ow, Poland}
\email{piotr.pokora@uken.krakow.pl}

\date{\today}

\begin{document}

\begin{abstract}
We construct two arrangements of nine lines in the complex projective plane with isomorphic intersection lattices but with different minimal degrees of Jacobian relations.  The common weak combinatorics is
\[
(n_2,n_3,n_4)=(9,7,1),
\]
so the example is far from the classical Ziegler--Yuzvinsky pair, whose weak combinatorics is \((n_{2},n_{3}) = (18,6)\).  For the two defining equations \(f\) and \(g\) we prove
\[
\mdr(f)=4,\qquad \mdr(g)=5.
\]
Since the degree is \(d=9\), the first equality gives \(\mdr(f)<d/2\).  Hence the pair gives a counterexample to a conjecture by the first author, that would have implied Terao's Conjecture about the combinatorial nature of the freeness property of line arrangements. Note that a very similar, but mildly weaker Conjecture, that is recalled below, also implies Terao's Conjecture and  is still open.
\end{abstract}

\maketitle

\section{Introduction}
Let \(S=\CC[x,y,z]\), and let \(C:f=0\) be a reduced plane curve of degree \(d\).  The Jacobian ideal is
\[
J_f=(f_x,f_y,f_z)\subset S,
\]
and the Milnor algebra is defined as
\[
M(f)=S/J_f.
\]
The graded module of Jacobian syzygies is
\[
\AR(f)=\{(a,b,c)\in S^3\;:\; a f_x+b f_y+c f_z=0\}.
\]
Equivalently, \(\AR(f)\) is the module \(D_0(f)\) of logarithmic derivations annihilating \(f\), after the usual identification
\[
(a,b,c)\longleftrightarrow a\partial_x+b\partial_y+c\partial_z,
\]
see \cite[Section 8.1]{DimcaBook}. The first numerical invariant of this module is the minimal degree of a Jacobian relation
\[
\mdr(f)=\min\{q\in\NN:\AR(f)_q\neq 0\}.
\]

For line arrangements, \(\mdr(f)\) is closely related to the freeness problem and to Terao-type questions.  Recall that the line arrangement $\cA$ is free if the graded $S$-module $D_0(f)$ is free. The celebrated Terao's Conjecture claims that if $\cA$ and $\cB$ are line arrangements in $\PP^2$ such that $\cA$ is free and $\cA$ and $\cB$ have isomorphic intersection lattices, then $\cB$ is also free. This conjecture has attracted a lot of interest, and it is known to hold for arrangements involving at most 14 lines, see \cite{BK, DIM13}.

If \(\cA:f=0\) is an arrangement of \(d\) lines in \(\PP^2\), then the intersection lattice determines the singularity multiplicities \(n_k\), hence also the total Tjurina number
\[
\tau(\cA)=\sum_{k\geq 2}n_k(k-1)^2.
\]
It does not, however, determine all Jacobian-syzygy data in general.  The classical examples of Ziegler \cite{Ziegler} and Yuzvinsky \cite{Yuzvinsky} give arrangements with the same intersection lattice but different Jacobian-syzygy behaviour; geometrically, the construction is related to a conic condition in Pascal-type configurations \cite{DimcaSticlaruPascal}.

The range \(\mdr(f)<d/2\) has been expected to be more rigid.  In particular, the first author formulated the following strengthening of Terao's conjecture for line arrangements.

\begin{conjecture}[{\cite[Conjecture~3.5]{DimcaOpen}}]
\label{mainc}
\emph{Let \(\cA:f=0\) be a line arrangement in \(\PP^2\), with \(d=\deg f\).  If \(\mdr(f)<d/2\), then \(\mdr(f)\) is combinatorially determined.}
\end{conjecture}

This conjecture would imply Terao's Conjecture, since the freeness of the arrangement $\cA$ is equivalent to 
$$\tau(\cA)=(d-1)^2-r(d-r-1),$$
where $r=\mdr(f)\leq (d-1)d/2$, see
\cite{duPCTC, Dmax}.

Note that the following weaker conjecture, but still implying Terao's Conjecture, is still open, see \cite[Conjecture~3.7]{DimcaOpen}.

\begin{conjecture}
\label{C2}
\emph{Let \(\cA:f=0\) be a line arrangement in \(\PP^2\), with \(d=\deg f\).  If \(\mdr(f)< (d-2)/2\), then \(\mdr(f)\) is combinatorially determined.}
\end{conjecture}
In view of the discussion at the beginning of Section 7, we see that Conjecture \ref{C2} is equivalent to claiming that the freeness defect $\nu(\cA)$ is determined by the combinatorics. In more geometrical terms,
Conjecture \ref{C2} is equivalent to claiming that the generic splitting type of the rank 2 vector bundle on $\PP^2$ associated to the gaded $S$-module $\AR(f)$ is determined by the combinatorics. This in turn is an open question asked in \cite[Question 7.12]{Cook+}, see for details \cite[Conjecture 3.9]{DimcaOpen}.

The purpose of the present note is to give an explicit counterexample to Conjecture \ref{mainc}. More precisely, we construct two arrangements of nine lines with isomorphic intersection lattices such that
\[
\mdr(f)=4,
\qquad
\mdr(g)=5.
\]
Since \(4<9/2\), the first arrangement satisfies the hypothesis of Conjecture \ref{mainc}, but the second arrangement has the same lattice and a different value of \(\mdr\).

The example is not a rephrasing of the classical Ziegler--Yuzvinsky pair.  That pair has only double and triple points, with weak combinatorics \((n_2,n_3)=(18,6)\).  Our arrangements have one quadruple point and seven triple points:
\[
(n_2,n_3,n_4)=(9,7,1).
\]
Thus the pair lies in the borderline nine-line case with maximal multiplicity \(4\).

A second goal of the paper is to record the homological nature of the two arrangements.  In the terminology of the hierarchy introduced in \cite{ADP}, the first arrangement has type \(1\), hence is plus-one generated \cite{AbePOG}, while the second has type \(2B\).  More concretely, their modules \(D_0(f)\) and \(D_0(g)\), and equivalently their Milnor algebras, have different minimal graded free resolutions.  Thus the counterexample is visible already at the level of \(\mdr\), and even more sharply at the level of homological type.

\section{Background on Jacobian syzygies and homological type}

We recall the conventions used throughout the paper.  Let \(C:f=0\) be a reduced plane curve of degree \(d\).  The module \(D_0(f)=\AR(f)\) is a graded \(S\)-module of rank two.  If its minimal homogeneous generators have degrees
\[
d_1\leq d_2\leq \cdots \leq d_m,
\]
we call this ordered sequence the sequence of exponents of \(C\).  In particular,
\[
d_1=\mdr(f).
\]
The minimal resolution of the Milnor algebra has the form
\begin{equation}\label{eq:Milnor-resolution-general}
0\longrightarrow F_3\longrightarrow F_2\longrightarrow S^3(-(d-1))\longrightarrow S\longrightarrow M(f)\longrightarrow 0,
\end{equation}
where the summands of \(F_2\) are obtained from the minimal generators of \(D_0(f)\).  More precisely, a generator of \(D_0(f)\) in degree \(e\) contributes a summand \(S(-(d-1+e))\) to \(F_2\).

Following \cite{ADP}, one defines the type of \(C\) by
\begin{equation}\label{eq:type-def}
t(C)=d_1+d_2+1-d.
\end{equation}
Equivalently, this number can be described as the initial degree of the Bourbaki ideal associated with a minimal generator of \(D_0(f)\).  Type \(0\) curves are precisely free curves, and type \(1\) curves are plus-one generated.  In type \(2\) there are two basic homological alternatives: type \(2A\), corresponding to a three-syzygy curve, and type \(2B\), corresponding to a four-syzygy curve.

For the purposes of this note, we shall use the following compact terminology.

\begin{definition}\label{def:homological-type}
Let \(\cA:f=0\) be a line arrangement.  Its \emph{homological type} is the collection
\[
\hType(\cA)=\bigl(d;\; \text{minimal resolution of }D_0(f);\; t(\cA)\bigr).
\]
When only the first part is needed, we write
\[
\HT_{D_0}(\cA)=\text{the minimal graded free resolution of }D_0(f).
\]
\end{definition}

This convention refines the numerical invariant \(\mdr(f)\).  Two arrangements may have the same intersection lattice and the same \(\mdr\), but still differ homologically through the graded Betti numbers of their Milnor algebras.  In the present example the distinction is stronger: the two arrangements already have different \(\mdr\)'s.

For later use, we also recall how \(\mdr\) is computed.  For each \(q\geq0\), consider the linear map
\begin{equation}\label{eq:Phi-map}
\Phi_{f,q}:S_q^3\longrightarrow S_{q+d-1},
\qquad
(a,b,c)\longmapsto a f_x+b f_y+c f_z.
\end{equation}
Then
\[
\AR(f)_q=\ker \Phi_{f,q}.
\]
Thus \(\mdr(f)\) is the smallest \(q\) for which \(\Phi_{f,q}\) fails to be injective.  Since
\[
\dim S_q=\binom{q+2}{2},
\]
this is an elementary exact linear-algebra computation over the ground field whenever the defining polynomial has rational coefficients.

\section{The two arrangements}

We work over \(\QQ\subset \CC\).  Let \(\cA:f=0\) be the arrangement of nine lines defined by
\begin{equation}\label{eq:f-def}
\begin{aligned}
f={}&(y-z)(y+2z)(2y+z)(x-2y-z)(x-y-2z)  \\
&\times (x-y+z)(x+y-z)(x+y+2z)(2x-y-2z).
\end{aligned}
\end{equation}
Let \(\cB:g=0\) be the arrangement defined by
\begin{equation}\label{eq:g-def}
\begin{aligned}
g={}&x(x-y)(x+y)(x-y-z)(y+z)z  \\
&\times (x-z)(x+y+2z)(x-2y-z).
\end{aligned}
\end{equation}
We label the lines in the displayed order.  Thus, for \(\cA\),
\[
\begin{array}{lll}
L_1:y-z=0, & L_2:y+2z=0, & L_3:2y+z=0,\\
L_4:x-2y-z=0, & L_5:x-y-2z=0, & L_6:x-y+z=0,\\
L_7:x+y-z=0, & L_8:x+y+2z=0, & L_9:2x-y-2z=0,
\end{array}
\]
and, for \(\cB\),
\[
\begin{array}{lll}
M_1:x=0, & M_2:x-y=0, & M_3:x+y=0,\\
M_4:x-y-z=0, & M_5:y+z=0, & M_6:z=0,\\
M_7:x-z=0, & M_8:x+y+2z=0, & M_9:x-2y-z=0.
\end{array}
\]

The following lemma verifies the combinatorics.

\begin{lemma}\label{lem:lattice}
With the labelling above, the non-double intersection points of both arrangements are exactly
\begin{equation}\label{eq:incidence-list}
\begin{gathered}
\{1,2,3\},\quad
\{1,4,5\},\quad
\{1,6,7\},\quad
\{2,4,6\},\\
\{3,5,7\},\quad
\{3,6,8\},\quad
\{4,7,9\},\quad
\{2,5,8,9\}.
\end{gathered}
\end{equation}
Consequently, \(\cA\) and \(\cB\) have isomorphic intersection lattices.
\end{lemma}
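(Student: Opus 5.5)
The plan is a direct verification, in two stages: first show that each listed triple or quadruple is concurrent, then show there are no further points of multiplicity at least three. For concurrency, I will exhibit the common point explicitly in each case. For \(\cA\):
\[
\{1,2,3\}\colon(1:0:0),\quad
\{1,4,5\}\colon(3:1:1),\quad
\{1,6,7\}\colon(0:1:1),\quad
\{2,4,6\}\colon(-3:-2:1).
\]
One checks, for instance, \(x-y+z=-3+2+1=0\) at \((-3:-2:1)\). The remaining points are
\[
\{3,5,7\}\colon(3:-1:2),\quad
\{3,6,8\}\colon(-3:-1:2),\quad
\{4,7,9\}\colon(1:0:1),\quad
\{2,5,8,9\}\colon(0:-2:1).
\]
For \(\cB\) the corresponding points are
\[
(0:0:1),\quad (0:1:-1),\quad (1:1:0),\quad (1:-1:2),\quad (1:-1:0),\quad (-1:1:-1),\quad (1:0:1),\quad (1:-1:1).
\]
In each case, substituting the point into the three or four linear forms gives zero, which is a routine check.

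For the converse, I will use a counting argument instead of checking all \(\binom{9}{3}\) triples. The listed configuration accounts for the following numbers of pairs of lines:
\[
7\binom{3}{2}+\binom{4}{2}=27
\]
out of \(\binom{9}{2}=36\). Next I verify that the listed sets pairwise share at most one line, so no pair of lines is counted twice; this is a combinatorial check on \eqref{eq:incidence-list}. It then remains to show that the remaining \(9\) pairs of lines meet in \(9\) distinct points, none lying on a third line and none coinciding with a listed point.

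To make this concrete, I first list the nine pairs not covered:
\[
\{1,8\},\ \{1,9\},\ \{2,7\},\ \{3,4\},\ \{3,9\},\ \{4,8\},\ \{5,6\},\ \{6,9\},\ \{7,8\}.
\]
For each arrangement, I compute these nine intersection points and check that they are pairwise distinct and distinct from the eight points above. That amounts to checking that the lines through each such point are exactly the two given ones.

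Equivalently, I can use the standard identity
\[
\sum_p \binom{m_p}{2}=\binom{d}{2},
\]
where the sum runs over all intersection points and \(m_p\) is the multiplicity of \(p\). If some uncovered pair met at a point with a third line, that point would contain a line triple not in the list. So it suffices to check that each of the nine points lies on no line other than its defining two. A missed triple point would make the triple point count exceed the listed data, contradicting the weak combinatorics \((9,7,1)\), so this check is exactly what must be carried out.

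Finally, both arrangements have the same incidence structure under the identity labelling \(L_i\leftrightarrow M_i\). The intersection lattice of a line arrangement is determined by the lines together with the set of points with their incident lines, double points being forced by the remaining pairs. Hence the lattices are isomorphic.

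The main obstacle is the exhaustive verification that no extra concurrences occur among the uncovered pairs, since a single overlooked coincidence would change the lattice. I would organize it as the \(9\) point computations per arrangement, or simply cite an exact rational computation of all \(\binom{9}{3}\) determinants.
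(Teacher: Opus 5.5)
\textbf{The gap: your concurrence points for \(\cB\) are wrong.} Five of the eight points you list for \(\cB\) are incorrect, so the step ``substituting the point into the three or four linear forms gives zero'' fails as written. Here is what goes wrong, in the order of \eqref{eq:incidence-list}:
\begin{enumerate}
\item Your third point \((1:1:0)\) is not on \(M_1\colon x=0\). It is the point of \(\{2,4,6\}\).
\item Your fourth point \((1:-1:2)\) lies only on \(M_3\) and \(M_4\), so it is an ordinary double point.
\item Your fifth and sixth points are the points of \(\{3,6,8\}\) and \(\{3,5,7\}\) respectively, i.e.\ they are swapped.
\item Your point \((1:-1:1)\) for the quadruple point is not on \(M_2\colon x-y=0\). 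It also coincides with your sixth point \((-1:1:-1)\).
\end{enumerate}
That duplication should itself have signalled trouble. Your converse argument tacitly needs the eight listed points to be pairwise distinct, because that is what excludes a further line through a listed point. Two distinct listed flats meeting at one point would put six lines through it.

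The correct points are
\[
(0:0:1),\ (0:1:-1),\ (0:1:0),\ (1:1:0),\ (1:-1:1),\ (1:-1:0),\ (1:0:1),\ (1:1:-1).
\]
For your nine uncovered pairs \(\{1,8\},\dots,\{7,8\}\), in your order, the intersection points in \(\cB\) are \((0:-2:1)\), \((0:1:-2)\), \((1:1:1)\), \((1:-1:2)\), \((1:-1:3)\), \((-1:-3:2)\), \((1:0:0)\), \((2:1:0)\), \((1:-3:1)\). Each lies on exactly two lines. With these corrections your argument for \(\cB\) goes through. Your eight points for \(\cA\) are all correct, and the nine uncovered pairs of \(\cA\) do meet in ordinary double points.

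\textbf{Comparison with the paper.} Once repaired, your route is a legitimate and more transparent organization of the paper's proof. The paper simply asserts that the \(\binom{9}{3}\) determinant computation on both lists of line vectors returns exactly \eqref{eq:incidence-list}. Your pair count \(27+9=36\), together with the fact that the listed sets pairwise share at most one line, reduces this to eight concurrence checks and nine double-point checks per arrangement, all of which can be done by hand.

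\textbf{A small correction.} Drop the appeal to the weak combinatorics \((9,7,1)\) as a consistency check. That is Corollary~\ref{cor:weak-combinatorics}, which is deduced from this lemma, so using it here would be circular. The direct double-point checks are the argument itself.
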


\begin{proof}
Represent a line \(a x+b y+c z=0\) by the vector \((a,b,c)\).  Three labelled lines \(i,j,k\) are concurrent if and only if the determinant of the corresponding \(3\times3\) matrix is zero.  The same determinant calculation for the two lists of line vectors gives exactly the subsets in \eqref{eq:incidence-list}.  The unique subset of cardinality four is \(\{2,5,8,9\}\), and every other dependent triple is one of the seven triples displayed above.  All other pairs meet in ordinary double points.

Thus the map \(L_i\mapsto M_i\) identifies the rank-two flats of the two arrangements and induces an isomorphism of intersection lattices.
\end{proof}

\begin{corollary}\label{cor:weak-combinatorics}
The common weak combinatorics of \(\cA\) and \(\cB\) is
\[
n_4=1,
\qquad
n_3=7,
\qquad
n_2=9.
\]
In particular,
\[
\tau(\cA)=\tau(\cB)=46.
\]
\end{corollary}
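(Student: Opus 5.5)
The corollary follows from \cref{lem:lattice} by counting, and I would carry out the count in two steps. First, the multiple points are read off \eqref{eq:incidence-list}: there is exactly one subset of size four, \(\{2,5,8,9\}\), so \(n_4=1\). The seven displayed triples give \(n_3=7\). The lemma also states that all remaining pairs of lines meet in ordinary double points, so \(n_k=0\) for \(k\geq 5\).

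Second, \(n_2\) is obtained from the standard pair count. Each pair of the \(\binom{9}{2}=36\) pairs of lines meets in exactly one point, and a point of multiplicity \(k\) absorbs \(\binom{k}{2}\) of these pairs. Hence
\[
36=n_2+3n_3+6n_4=n_2+21+6,
\]
which gives \(n_2=9\). As a sanity check, I would verify this against the incidence list directly. A line with \(r_i\) incidences at triple points and \(s_i\) at the quadruple point meets \(8-2r_i-3s_i\) other lines in double points, and summing over the lines and halving must again give \(9\).

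Finally, the Tjurina number comes from the formula recalled in the introduction:
\[
\tau=\sum_k n_k(k-1)^2=9\cdot 1+7\cdot 4+1\cdot 9=46.
\]
Since the intersection lattices are isomorphic by \cref{lem:lattice}, all these numbers agree for \(\cA\) and \(\cB\). There is no real obstacle here: the only substantive input is the completeness of the list in \cref{lem:lattice}, which we take as given.
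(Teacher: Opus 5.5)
Your proposal is correct and is essentially the argument the paper intends: the paper gives no separate proof, since the corollary follows from Lemma~\ref{lem:lattice} by reading off \(n_4=1\) and \(n_3=7\), getting \(n_2=36-21-6=9\) from the pair count, and summing \((k-1)^2\) to obtain \(\tau=9+28+9=46\). Your per-line sanity check also works out (for instance, lines \(8\) and \(9\) each lie on three double points, and the per-line counts total \(18\), giving \(9\) double points).
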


\section{The minimal degree of a Jacobian relation}

We now compute the kernels of the maps \(\Phi_{f,q}\) and \(\Phi_{g,q}\) from \eqref{eq:Phi-map}.  The computations are over \(\QQ\).  Since \(d=9\), the maps are
\[
\Phi_{h,q}:S_q^3\longrightarrow S_{q+8},
\qquad h\in\{f,g\}.
\]
The following table records the relevant ranks.

\begin{center}
\begin{tabular}{c|c|c|c|c|c}
\toprule
\(q\) & \(\dim S_q^3\) & \(\rank \Phi_{f,q}\) & \(\dim\AR(f)_q\) & \(\rank \Phi_{g,q}\) & \(\dim\AR(g)_q\) \\
\midrule
0 & 3  & 3  & 0 & 3  & 0 \\
1 & 9  & 9  & 0 & 9  & 0 \\
2 & 18 & 18 & 0 & 18 & 0 \\
3 & 30 & 30 & 0 & 30 & 0 \\
4 & 45 & 44 & 1 & 45 & 0 \\
5 & 63 & 59 & 4 & 59 & 4 \\
6 & 84 & 74 & 10 & 74 & 10 \\
7 & 108& 90 & 18 & 90 & 18 \\
\bottomrule
\end{tabular}
\end{center}

\begin{proposition}\label{prop:mdr}
For the arrangements \(\cA:f=0\) and \(\cB:g=0\) one has
\[
\mdr(f)=4,
\qquad
\mdr(g)=5.
\]
\end{proposition}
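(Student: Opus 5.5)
The proof rests on the rank table above. Since \(\AR(h)_q=\ker\Phi_{h,q}\), \(\mdr(h)\) is the least \(q\) with \(\rank\Phi_{h,q}<3\binom{q+2}{2}\). Doing the full rank computation over \(\QQ\) is routine but opaque, so I would replace the two critical entries by certificates that can be checked by hand or by a short exact computation. Three things are needed: injectivity of \(\Phi_{f,q}\) for \(q\le 3\) and of \(\Phi_{g,q}\) for \(q\le 4\); an explicit nonzero element of \(\AR(f)_4\); and an explicit nonzero element of \(\AR(g)_5\).

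\textbf{Lower bounds.} For \(q\le 3\), injectivity is common to both arrangements and follows from a general bound. For a line arrangement with a point of multiplicity \(m\), one has \(\mdr\ge\min(d-m,\ldots)\); more concretely, \(\mdr(f)\ge 3\) by the known fact that \(\mdr\le 2\) forces very special combinatorics (a pencil, a near pencil, or the few arrangements with many lines through a point). Here the maximal multiplicity is \(4<d-1\). The degree \(3\) case I would simply read off the rank computation, namely the full column rank \(30\).

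The real content is that \(\Phi_{g,4}\) is injective, since the lattices agree and no combinatorial bound can separate \(f\) from \(g\). I would certify this by exhibiting a nonzero \(45\times45\) minor of the matrix of \(\Phi_{g,4}\), with rows indexed by a chosen set of \(45\) monomials of degree \(12\). Equivalently, I would check that the reduction modulo a suitable prime \(p\) has full rank. Full rank mod \(p\) implies full rank over \(\QQ\), because the entries are integers.

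As a conceptual cross-check I would use \(\tau=46\) together with du Plessis–Wall. If \(r=\mdr\), then \((d-1)(d-r-1)\le\tau\le (d-1)^2-r(d-r-1)\) for \(r\le d/2\). This allows \(r=4\) (\(64-16=48\ge46\)) and also \(r=5\), so it only confirms consistency and does not decide between them.

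\textbf{Upper bounds.} For \(f\), I would find the one-dimensional kernel of \(\Phi_{f,4}\) explicitly and check \(a f_x+bf_y+cf_z=0\) directly. To find it, I would exploit the geometry. The quadruple point \(L_2\cap L_5\cap L_8\cap L_9\) is \((1:-2:1)\), and there are seven triple points. An \(f\)-specific feature, such as the triple points or the two remaining double-point classes lying on a conic (a Pascal-type condition as in \cite{DimcaSticlaruPascal}), should produce the degree \(4\) derivation. For \(g\), the table gives \(\dim\AR(g)_5=4\), and any explicit nonzero vector of degree \(5\) suffices. Alternatively, \(\mdr(g)\le 5\) follows since \(r\le 5\) is forced: with \(r\ge 6>d/2\), the inequality \(\tau\le (d-1)^2-r(d-r-1)\) fails.

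The main obstacle is the injectivity of \(\Phi_{g,4}\), a genuinely non-combinatorial statement. I would settle it by an exact rank computation, either with rational Gaussian elimination or with a modular rank certificate, rather than by a conceptual argument.
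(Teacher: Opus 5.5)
Your core plan is the paper's proof. The paper reads \(\mdr\) off the exact rank table of \(\Phi_{h,q}\) over \(\QQ\), and the explicit degree-four syzygy \((A,B,C)\) displayed after the proof is exactly your certificate for \(\mdr(f)\le 4\). Certifying injectivity of \(\Phi_{f,3},\Phi_{g,3},\Phi_{g,4}\) by a nonzero maximal minor, or by full rank modulo a prime (valid because the matrix has integer entries), is a sound way to make that table checkable. Two of your auxiliary arguments, however, are wrong or empty as written.

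\textbf{The alternative proof of \(\mdr(g)\le 5\) fails.} For \(d=9\), the plain bound \((d-1)^2-r(d-r-1)\) equals \(52,57,64\) for \(r=6,7,8\). All of these are \(\ge\tau=46\), so the bound excludes nothing. You would need du Plessis--Wall's sharpened bound for \(2r\ge d\), namely \(\tau\le(d-1)^2-r(d-r-1)-\binom{2r+2-d}{2}\), which gives \(42,36,28<46\).

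A simpler route needs no search at all. Let \(v\) be the coordinate vector of the quadruple point, \(\partial_v=v_1\partial_x+v_2\partial_y+v_3\partial_z\), \(Q\) the product of the five lines not through it, and \(E\) the Euler derivation. Then \(Q\partial_v-\tfrac{1}{9}(\partial_vQ)E\) is a nonzero element of \(\AR\) of degree \(d-m=5\), for either arrangement.

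\textbf{The bound ``\(\mdr\ge\min(d-m,\ldots)\)'' is left unspecified and proves nothing.} The clean way to dispose of \(q\le 2\) is the du Plessis--Wall lower bound \((d-1)(d-r-1)\le\tau\). Here this reads \(8(8-r)\le 46\), which forces \(r\ge 3\). The cases \(q=3\) (both arrangements) and \(q=4\) (for \(g\)) genuinely need the computation, as you say.

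Two minor points: the quadruple point of \(\cA\) is \((0:-2:1)\), not \((1:-2:1)\). The Pascal/conic heuristic for \(f\) is unsupported and unnecessary.
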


\begin{proof}
By definition, \(\AR(h)_q=\ker\Phi_{h,q}\).  The table shows that \(\Phi_{f,q}\) is injective for \(q=0,1,2,3\), while \(\dim\ker\Phi_{f,4}=1\).  Hence \(\mdr(f)=4\).  Similarly, \(\Phi_{g,q}\) is injective for \(q=0,1,2,3,4\), while \(\dim\ker\Phi_{g,5}=4\).  Hence \(\mdr(g)=5\).
\end{proof}

For completeness, we give one explicit degree-four syzygy for \(f\).  It has the form
\[
A f_x+B f_y+C f_z=0,
\]
where
\[
\begin{aligned}
A={}&-x(6x^3-17x^2y-10x^2z-14xy^2-20xyz-47xz^2 \\
&\hspace{2.2cm}+33y^3-15y^2z+75yz^2+69z^3),
\end{aligned}
\]
\[
\begin{aligned}
B={}&12x^3y-28x^2y^2+10x^2yz+18x^2z^2-4xy^3+20xy^2z \\
&-61xyz^2-36xz^3+12y^4-57y^3z+33y^2z^2+75yz^3+18z^4,
\end{aligned}
\]
and
\[
\begin{aligned}
C={}&12x^3z+18x^2y^2-x^2yz-17x^2z^2-40xy^2z-34xyz^2 \\
&-7xz^3-18y^4+39y^3z+69y^2z^2-3yz^3-6z^4.
\end{aligned}
\]

\section{Homological type}

The rank computations above also determine the minimal generators of the syzygy modules.  Let
\[
\mu_q(D_0(h))
=
\dim_\CC \frac{D_0(h)_q}{S_1D_0(h)_{q-1}}
\]
be the number of new minimal generators of \(D_0(h)\) in degree \(q\) modulo $S_1 D_{0}(h)_{q-1}$.  From the same linear algebra, one obtains
\[
\begin{array}{c|cccccccc}
q &0&1&2&3&4&5&6&7\\
\hline
\mu_q(D_0(f))&0&0&0&0&1&1&1&0\\
\mu_q(D_0(g))&0&0&0&0&0&4&0&0
\end{array}.
\]
Indeed, for \(f\), the unique degree-four syzygy contributes three independent multiples in degree five; since \(\dim D_0(f)_5=4\), there is exactly one new generator in degree five.  Similarly, comparing \(D_0(f)_6\) with \(S_1D_0(f)_5\) gives exactly one new generator in degree six.  No new generators occur in degree seven.  For \(g\), all four degree-five syzygies are new, and they generate the subsequent pieces.

\begin{proposition}\label{prop:D0-resolutions}
The minimal graded free resolutions of the two syzygy modules are
\begin{equation}\label{eq:D0-f-resolution}
0\longrightarrow S(-7)
\longrightarrow S(-4)\oplus S(-5)\oplus S(-6)
\longrightarrow D_0(f)
\longrightarrow 0,
\end{equation}
and
\begin{equation}\label{eq:D0-g-resolution}
0\longrightarrow S(-6)^2
\longrightarrow S(-5)^4
\longrightarrow D_0(g)
\longrightarrow 0.
\end{equation}
Consequently, \(\cA\) has type \(1\), with exponents \((4,5,6)\), while \(\cB\) has type \(2B\), with exponents \((5,5,5,5)\).
In particular,
\[
\hType(\cA)\neq \hType(\cB).
\]
\end{proposition}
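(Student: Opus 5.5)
We will deduce both resolutions from the generator counts $\mu_q$ recorded above, together with the fact that $D_0(h)$ is a reflexive graded module of rank two over $S=\CC[x,y,z]$. Being a second syzygy module, it has projective dimension at most one, so its minimal resolution has length one. Let $F_0\to D_0(h)$ be the free module on the minimal generators. Then the kernel $K$ is free of rank $\rank F_0-2$. The generator degrees are fixed by the table of $\mu_q$: they are $(4,5,6)$ for $f$ and $(5,5,5,5)$ for $g$. This gives $F_0=S(-4)\oplus S(-5)\oplus S(-6)$ and $F_0=S(-5)^4$ respectively, and in these cases $K$ has rank $1$ and $2$.

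To determine the twists of $K$ we compare Hilbert functions. For every $q$ we have $\dim D_0(h)_q=\dim (F_0)_q-\dim K_q$, and the values $\dim D_0(h)_q=\dim\ker\Phi_{h,q}$ are given in the table for $q\le 7$.

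For $f$, the value $\dim(F_0)_6=\binom{4}{2}+\binom{3}{2}+1=10$ agrees with $\dim D_0(f)_6=10$, so $K_6=0$. At $q=7$ we get $\dim(F_0)_7=10+6+3=19$, while $\dim D_0(f)_7=18$. Hence $K$ starts with a single generator in degree $7$, and since $K$ has rank one, $K=S(-7)$. This is consistent with the standard relation for plus-one generated curves, where the last twist is $d_3+1=7$ and $d_1+d_2=d$.

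For $g$, we have $\dim(F_0)_5=4$, matching $\dim D_0(g)_5$. At $q=6$, $\dim(F_0)_6=12$ while $\dim D_0(g)_6=10$, so $K$ has two generators in degree $6$. Since $K$ has rank two, $K\supseteq S(-6)^2$. To conclude $K=S(-6)^2$, we check one more degree. At $q=7$, $\dim(F_0)_7=24$ and $\dim D_0(g)_7=18$, so $\dim K_7=6=\dim S(-6)^2_7$. We must also rule out further generators in higher degrees. For this we use the known general shape of four-syzygy (type $2B$) curves from \cite{ADP}: a curve with exponents $(d_1,\dots,d_4)$ and $d_1+d_2=d+1$ has second syzygies in degrees $d_4+1$. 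Alternatively, we compare first Chern classes. The rank two bundle $\widetilde{D_0(g)}$ has $c_1=-(d-1)=-8$, while $c_1(F_0)-c_1(K)=-20+c_1(K)$. This forces $c_1(K)=12$, so $K=S(-6)^2$ exactly. The same $c_1$ check confirms the answer for $f$: $-15+7=-8$.

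Finally, the type is $t=d_1+d_2+1-d$, which gives $4+5-8=1$ and $5+5-8=2$. Four generators means we are in the four-syzygy alternative $2B$. The step I expect to be the main obstacle is certifying the $\mu_q$ table itself, i.e.\ that $S_1D_0(h)_{q-1}$ has the claimed dimension, and that no generators occur beyond degree $7$. The $c_1$ argument gives the latter, because with $K$ free of the right rank, any extra generator of $F_0$ would force an extra syzygy and spoil $c_1=-8$. The former is an exact rank computation over $\QQ$.
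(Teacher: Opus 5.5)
Your skeleton matches the paper's. You read the generator degrees from the $\mu_q$ table, use $\operatorname{pd} D_0(h)\le 1$ so the relation module $K$ is free of rank $m-2$, and fix the twists of $K$ from $\dim(F_0)_q-\dim D_0(h)_q$ in degrees $6,7$. If you take the $\mu_q$ data as complete in all degrees, as the paper implicitly does, your proof is the paper's.

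The step you add to exclude minimal generators in degrees $\ge 8$ is wrong. There is a sign slip first: $c_1(F_0)-c_1(K)=-20-c_1(K)$, so $c_1(K)=-12$. More importantly, $c_1=-8$ only says $\sum_i d_i-\sum_j b_j=d-1$. Suppose you add a summand $S(-e)$ to $F_0$ and a summand $S(-e)$ to $K$, with $e\ge 8$. This leaves that sum unchanged, and in fact leaves the whole Hilbert function of $D_0(h)$ unchanged. Minimality does not forbid such a pair, because the new relation need only involve old generators of degree $<e$. So no numerical invariant built from the $\dim D_0(h)_q$ can rule out further generators. Your claim that ``any extra generator would spoil $c_1=-8$'' is false. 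Conversely, once $F_0=S(-5)^4$ is known, your degree-$7$ and $c_1$ checks for $g$ are redundant: a free rank-two $K$ with $K_{<6}=0$ and $\dim K_6=2$ is already $S(-6)^2$.

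A correct way to close the gap uses only $\mdr(h)$ and $\tau=46$. A syzygy $\rho_1$ of degree $r=\mdr(h)$ has coprime components, since otherwise dividing by their gcd gives a lower-degree syzygy. It therefore yields an exact sequence
\[
0\longrightarrow S(-r)\longrightarrow D_0(h)\longrightarrow I_Z(r+1-d)\longrightarrow 0.
\]
Here $I_Z$ is the saturated (Bourbaki) ideal of a zero-dimensional scheme of length $(d-1)^2-r(d-1-r)-\tau$.

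\begin{itemize}
\item For $f$ this length is $2$. So $I_Z=(\ell,q)$ is a complete intersection of a line and a conic, and the horseshoe lemma gives $0\to S(-7)\to S(-4)\oplus S(-5)\oplus S(-6)\to D_0(f)\to 0$.
\item For $g$ the length is $3$, and $(I_Z)_1\cong D_0(g)_4=0$, so $Z$ is not on a line. Hence $I_Z$ has resolution $0\to S(-3)^2\to S(-2)^3$, and one gets $0\to S(-6)^2\to S(-5)^4\to D_0(g)\to 0$.
\end{itemize}

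Both resolutions are visibly minimal, so no generators beyond those found can exist.
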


\begin{proof}
The minimal generator degrees are read from the numbers \(\mu_q\) above.  Hence \(D_0(f)\) has minimal generators in degrees \(4,5,6\), while \(D_0(g)\) has four minimal generators, all in degree \(5\).

Since \(D_0(f)\) and \(D_0(g)\) have rank two, their Hilbert series determine the degrees of the first syzygies among these minimal generators.  For \(f\), the three generators in degrees \(4,5,6\) contribute
\[
\binom{7-4+2}{2}+\binom{7-5+2}{2}+\binom{7-6+2}{2}=10+6+3=19
\]
to degree \(7\), while the rank table gives \(\dim D_0(f)_7=18\).  Thus there is one relation in degree \(7\), giving \eqref{eq:D0-f-resolution}.  For \(g\), four degree-five generators contribute \(4\dim S_1=12\) to degree \(6\), while \(\dim D_0(g)_6=10\).  Thus there are two independent relations in degree \(6\), giving \eqref{eq:D0-g-resolution}.

Finally, using \eqref{eq:type-def}, for \(f\) we get
\[
t(\cA)=4+5+1-9=1,
\]
so \(\cA\) is plus-one generated.  For \(g\),
\[
t(\cB)=5+5+1-9=2.
\]
Since \(D_0(g)\) has four minimal generators and the two first relations have the expected linear form in the type-two case, this is type \(2B\) in the terminology of \cite{ADP}.
\end{proof}
\begin{remark}
In the light of \cite[Definition 1.1]{mpog}, the arrangement $\mathcal{A}$ is minimal plus-one generated, which means that $d_{1}+d_{2} = {\rm deg}(f)$ and $d_{3}=d_{2}+1$.
\end{remark}
Shifting \eqref{eq:D0-f-resolution} and \eqref{eq:D0-g-resolution} by \(d-1=8\), we obtain the corresponding resolutions of the Milnor algebras.

\begin{corollary}\label{cor:Milnor-resolutions}
The Milnor algebra \(M(f)=S/J_f\) has minimal graded free resolution
\begin{equation}\label{eq:Mf-resolution}
0\longrightarrow S(-15)
\longrightarrow S(-12)\oplus S(-13)\oplus S(-14)
\longrightarrow S(-8)^3
\longrightarrow S
\longrightarrow M(f)
\longrightarrow 0.
\end{equation}
The Milnor algebra \(M(g)=S/J_g\) has minimal graded free resolution
\begin{equation}\label{eq:Mg-resolution}
0\longrightarrow S(-14)^2
\longrightarrow S(-13)^4
\longrightarrow S(-8)^3
\longrightarrow S
\longrightarrow M(g)
\longrightarrow 0.
\end{equation}
\end{corollary}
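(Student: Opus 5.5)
The corollary follows by splicing the resolutions of \(D_0(f)\) and \(D_0(g)\) from \cref{prop:D0-resolutions} into the general shape \eqref{eq:Milnor-resolution-general}. The key input is the exact sequence
\[
0\longrightarrow D_0(h)(-(d-1))\longrightarrow S^3(-(d-1))\xrightarrow{(h_x,h_y,h_z)} S\longrightarrow M(h)\longrightarrow 0,\qquad h\in\{f,g\}.
\]
It is exact because \(\AR(h)=D_0(h)\) is by definition the kernel of the map given by the partial derivatives. Here \(d-1=8\).

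First, I tensor the resolutions \eqref{eq:D0-f-resolution} and \eqref{eq:D0-g-resolution} with \(S(-8)\). For \(f\) this gives
\[
0\to S(-15)\to S(-12)\oplus S(-13)\oplus S(-14)\to D_0(f)(-8)\to 0,
\]
and for \(g\) it gives
\[
0\to S(-14)^2\to S(-13)^4\to D_0(g)(-8)\to 0.
\]
Composing the surjection onto \(D_0(h)(-8)\) with the inclusion into \(S(-8)^3\) yields the complexes \eqref{eq:Mf-resolution} and \eqref{eq:Mg-resolution}. Exactness at every spot is immediate from the two short exact sequences.

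It remains to check minimality, meaning that every differential has entries in the maximal ideal \(\mathfrak m=(x,y,z)\).
\begin{itemize}
\item The map \(S(-8)^3\to S\) has entries \(h_x,h_y,h_z\) of degree \(8\), so they lie in \(\mathfrak m\).
\item Each map \(F_2\to S(-8)^3\) is given by syzygies of degrees \(\ge 4\), so its entries also lie in \(\mathfrak m\).
\item The map \(F_3\to F_2\) is the tail of a resolution that \cref{prop:D0-resolutions} asserts is minimal.
\end{itemize}
A more intrinsic way to see the same thing is that degree-zero entries would require equal twists between consecutive terms. Here all twists strictly increase: \(0<8<12,13,14<15\) for \(f\), and \(0<8<13<14\) for \(g\).

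The main point requiring care is that the partials \(h_x,h_y,h_z\) are linearly independent, so that \(S(-8)^3\) is the correct first syzygy-free term and not redundant. This holds because \(\mdr(h)>0\): by \cref{prop:mdr} one has \(\AR(h)_0=0\), and indeed \(\mdr(h)\ge 4\). Hence the three partials minimally generate \(J_h\), and the resolutions stated in the corollary are the minimal graded free resolutions.
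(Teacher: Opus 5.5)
Your proposal is correct and is the same argument as the paper's, which shifts the resolutions of \(D_0(f)\) and \(D_0(g)\) from Proposition~\ref{prop:D0-resolutions} by \(d-1=8\) and splices them into \(0\to D_0(h)(-8)\to S(-8)^3\to S\to M(h)\to 0\). Your minimality check via strictly increasing twists spells out a step that the paper's one-line proof leaves implicit.
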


\begin{remark}
The Hilbert functions of the two Milnor algebras are also different.  From \eqref{eq:Mf-resolution} and \eqref{eq:Mg-resolution}, the Hilbert series are
\[
H_{M(f)}(t)=\frac{1-3t^8+t^{12}+t^{13}+t^{14}-t^{15}}{(1-t)^3},
\]
and
\[
H_{M(g)}(t)=\frac{1-3t^8+4t^{13}-2t^{14}}{(1-t)^3}.
\]
For instance,
\[
\dim M(f)_{12}=47,
\qquad
\dim M(g)_{12}=46.
\]
Both Hilbert functions stabilize to the common Tjurina number \(46\).
\end{remark}

\section{Counterexample to Conjecture~3.5}

We now state the conclusion explicitly.

\begin{theorem}\label{thm:counterexample}
The pair \((\cA,\cB)\) defined by \eqref{eq:f-def} and \eqref{eq:g-def} is a pair of nine-line arrangements with isomorphic intersection lattices but with
\[
\mdr(\cA)=4,
\qquad
\mdr(\cB)=5.
\]
Consequently, Conjecture~\ref{mainc} is false.
\end{theorem}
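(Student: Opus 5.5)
The theorem is essentially an assembly of results already established, so the plan is to combine them and then check the hypothesis of Conjecture~\ref{mainc} explicitly. First, by Lemma~\ref{lem:lattice}, the labelling \(L_i\mapsto M_i\) identifies the non-double intersection points of \(\cA\) and \(\cB\), as listed in \eqref{eq:incidence-list}. Since all remaining pairs of lines meet in ordinary double points, this bijection preserves every rank-two flat. It therefore induces an isomorphism of intersection lattices. Both arrangements consist of nine distinct lines, so \(d=9\) in both cases. Reducedness follows from the fact that no two of the vectors \((a,b,c)\) listed are proportional.

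Second, I would invoke Proposition~\ref{prop:mdr}, which gives \(\mdr(f)=4\) and \(\mdr(g)=5\). To make this independent of the rank table, I would add two certificates. For \(\mdr(f)\le 4\), it suffices to substitute the explicit triple \((A,B,C)\) displayed after Proposition~\ref{prop:mdr} into \(Af_x+Bf_y+Cf_z\) and observe that this vanishes identically. For the lower bounds \(\mdr(f)\ge4\) and \(\mdr(g)\ge5\), one needs injectivity of \(\Phi_{f,3}\) and of \(\Phi_{g,4}\). Injectivity in the top degree implies injectivity in all lower degrees, because a nonzero syzygy of degree \(q\) multiplied by \(x\) gives a nonzero syzygy of degree \(q+1\). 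So only two checks are needed: that \(\Phi_{f,3}\) has full rank \(30\), and that \(\Phi_{g,4}\) has full rank \(45\).

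The main obstacle is certifying \(\rank\Phi_{g,4}=45\). This is a \(45\times 78\) exact rational matrix, since \(\dim S_{12}=91\) is the target dimension and the rank needed is \(45\). Full rank can be certified by exhibiting one nonzero \(45\times45\) minor. Alternatively, one can reduce modulo a prime \(p\): the rank cannot increase under reduction, so a full-rank computation over \(\mathbb{F}_p\) certifies full rank over \(\QQ\).

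Alternatively, a structural route uses the plus-one generated structure of Proposition~\ref{prop:D0-resolutions}. The Hilbert series of \(M(g)\) differs from that of \(M(f)\) in degree \(12\), and \(\dim M(g)_{12}=46\) already equals \(\tau(\cB)\). This forces \(\AR(g)_4=0\). Indeed, \(\dim M(g)_{12}=91-3\cdot 45+\dim\AR(g)_4\), and this quantity is bounded below by \(\tau=46\), which in turn follows from \(\dim M(g)_k\ge\tau\) for large \(k\) together with monotonicity near the top. I would present the direct rank check and keep this as a remark.

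Finally, Conjecture~\ref{mainc} fails. Since \(\mdr(f)=4<9/2\), the arrangement \(\cA\) satisfies the hypothesis of the conjecture. If \(\mdr\) were combinatorially determined, then the lattice-isomorphic arrangement \(\cB\) would have \(\mdr(g)=4\), contradicting \(\mdr(g)=5\).
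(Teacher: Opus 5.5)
Your main argument is correct and is the same as the paper's proof. Lemma~\ref{lem:lattice} gives the lattice isomorphism, Proposition~\ref{prop:mdr} gives \(\mdr(f)=4<9/2\) and \(\mdr(g)=5\), and together these contradict Conjecture~\ref{mainc}. The certificates you add are sound: the explicit degree-four syzygy, the reduction to injectivity of \(\Phi_{f,3}\) and \(\Phi_{g,4}\), and a nonzero maximal minor or a rank computation modulo a prime. One correction: \(\Phi_{g,4}:S_4^3\to S_{12}\) is a \(91\times 45\) matrix, not \(45\times 78\).

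You should discard the ``structural route'', because it is both backwards and circular. One has \(\dim M(g)_{12}=91-3\cdot 15+\dim\AR(g)_4=46+\dim\AR(g)_4\). So the lower bound \(\dim M(g)_{12}\geq\tau=46\) only yields \(\dim\AR(g)_4\geq 0\); forcing \(\AR(g)_4=0\) needs the upper bound \(\dim M(g)_{12}\leq 46\). Moreover, the value \(\dim M(g)_{12}=46\) and the resolutions you quote are themselves derived in the paper from the rank table. Note also that \(\cB\) is of type \(2B\), not plus-one generated.
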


\begin{proof}
By Lemma \ref{lem:lattice}, the arrangements have isomorphic intersection lattices.  By Proposition \ref{prop:mdr}, their minimal degrees of Jacobian relations are \(4\) and \(5\), respectively.  Since the degree is \(d=9\), the first arrangement satisfies
\[
\mdr(\cA)=4<\frac92=\frac d2.
\]
Conjecture \ref{mainc} predicts that, under this strict inequality, \(\mdr\) is determined by the intersection lattice.  But \(\cB\) has the same intersection lattice as \(\cA\) and a different value of \(\mdr\).  This contradicts the conjecture.
\end{proof}

\section{The freeness defect \texorpdfstring{\(\nu\)}{nu}}

It is useful to separate the behaviour of \(\mdr\) from the behaviour of the freeness defect \(\nu\).  By the formulas recalled in \cite{DimcaOpen}, if \(C:f=0\) has degree \(d\), total Tjurina number \(\tau(C)\), and \(r=\mdr(f)\), then
\[
\nu(C)=(d-1)^2-r(d-1-r)-\tau(C)
\]
when \(r<(d-2)/2\), while for  \(r\geq (d-2)/2\) one has
\[
\nu(C)=\left\lceil \frac34(d-1)^2\right\rceil-\tau(C).
\]
In the present example \(d=9\) and \(\tau=46\).  We get

\[
\nu(\cA)=\nu(\cB)=\left\lceil \frac34\cdot 8^2\right\rceil-46=48-46=2.
\]
The example therefore attacks precisely the combinatoriality of \(\mdr\) in the range of Conjecture \ref{mainc}.  It does not, by itself, disprove the corresponding combinatoriality statement for \(\nu\)  in Conjecture \ref{C2}.
\begin{remark}
The pair \((\mathcal A,\mathcal B)\) realizes the only two possibilities for curves
with \(\nu=2\) described in \cite[Remark~1.6]{mpog}: the arrangement
\(\mathcal A\) is minimal plus-one generated, whereas \(\mathcal B\) is a
maximal Tjurina curve of type \((2r-1,r)\).
\end{remark}

\section*{Funding}
Alexandru Dimca is partially supported from the project ``Singularities and Applications'' - CF 132/31.07.2023 funded by the European Union - NextGenerationEU - through Romania's National Recovery and Resilience Plan.

Piotr Pokora is supported by the National Science Centre (Poland) Sonata Bis Grant 
\[\textbf{2023/50/E/ST1/00025.} \] For the purpose of Open Access, the author has applied a CC-BY public copyright license to any Author Accepted Manuscript (AAM) version arising from this submission.

\end{document}